\newcommand{\RR}{\mathbb{R}}
\newcommand{\NN}{\mathbb{N}}
\newcommand{\ZZ}{\mathbb{Z}}
\newcommand{\PP}{\mathbb{P}}
\newcommand{\cA}{\mathcal{A}}
\newcommand{\cB}{\mathcal{B}}
\newcommand{\cC}{\mathcal{C}}
\newcommand{\cP}{\mathcal{P}}
\newcommand{\cU}{\mathcal{U}}
\renewcommand{\l}{\left}
\newcommand{\eff}{\mathrm{eff}}
\newcommand{\Erw}{ \mathbf{E}_{x} }
\newcommand{\Prb}{ \mathbf{P}_{x} }
\newcommand{\Tr}{{\mathop{\mathrm{Tr}}}}
\newcommand{\vol}{{\mathop{\mathrm{vol}}}}
\DeclareMathOperator{\const}{\mathrm{const}}
\newcommand{\dist}{{\ensuremath{\mathrm{dist}}}}
\def\coloneq{\mathrel{\mathop:}=}
\def\eqcolon{=\mathrel{\mathop:}}
\newtheorem{thm}{Theorem}
\newtheorem{lem}[thm]{Lemma}
\newtheorem{ass}[thm]{Assumption}
\theoremstyle{definition}
\newtheorem{dfn}[thm]{Definition}
\newtheorem{rem}[thm]{Remark}
\newcommand{\FinSetsZd}{\mathcal{F}_{\text{fin}}(\ZZ^d)}
\newcommand{\interior}[1]{\overset{\circ}{#1}}
\newcommand{\numb}[2][]{\thinspace{\sharp_{#1}} #2}
\newcommand{\bigpar}{\par\bigskip\noindent\ignorespaces}
\newcommand{\hm}[1]{\leavevmode{\marginpar{\tiny%
$\hbox to 0mm{\hspace*{-0.5mm}$\leftarrow$\hss}%
\vcenter{\vrule depth 0.1mm height 0.1mm width \the\marginparwidth}%
\hbox to 0mm{\hss$\rightarrow$\hspace*{-0.5mm}}$\\\relax\raggedright #1}}}
\begin{document}

\title[$L^p$-approximation of the integrated density of states ]{$L^p$-approximation of the integrated density of states for Schr\"odinger operators with finite local complexity}

\author[M.~J.~Gruber]{Michael J.\ Gruber}
\address[M.G.]{TU Clausthal\\
Institut f\"ur Mathematik\\
38678 Clausthal-Zellerfeld\\
Germany}
\urladdr{http://www.math.tu-clausthal.de/~mjg/}

\author[D.~H.~Lenz]{Daniel H.\ Lenz}
\address[D.L.]{Mathematisches Institut, Friedrich-Schiller-Universit\"at Jena , 07743 Jena, Germany}
\urladdr{http://www.analysis-lenz.uni-jena.de/}

\author[I.~Veseli\'c]{Ivan Veseli\'c}
\address[I.V.]{Emmy-Noether-Projekt `Schr\"odingeroperatoren',
Fakult\"at f\"ur Mathematik,  09107 TU  Chemnitz, Germany}
\urladdr{http://www.tu-chemnitz.de/mathematik/enp}

\thanks{\copyright 2010 by the authors. Faithful reproduction of this article,
        in its entirety, by any means is permitted for non-commercial purposes.}

\keywords{integrated density of states, random Schr\"odinger
operators, finite local complexity} \subjclass[2000]{35J10,81Q10}

\begin{abstract}
We study spectral properties of Schr\"odinger operators on $\RR^d$.
The electromagnetic potential is assumed to be determined locally
by a colouring of the lattice points in $\ZZ^d$, with the property that
frequencies of finite patterns are well defined.
We prove that the integrated density of states (spectral distribution function)
is approximated by its finite volume analogues, i.e.~the normalised eigenvalue counting functions.
The convergence holds in the space $L^p(I)$ where $I$ is any finite energy interval and $1\leq p< \infty$ is arbitrary.
\end{abstract}

\maketitle

\section{Introduction}\label{s:intro}

Spectral properties play a key role in the analysis of selfadjoint operators.
This is in particular the case for Hamiltonians describing the time evolution of
quantum mechanical systems.
In the context of mathematical physics one often studies the integrated density of states,
in the following abbreviated by IDS.
It is very natural to think of the IDS as the normalized eigenvalue counting function
of the restriction of the Hamiltonian to a large but finite volume system.
This leads to the question in what sense and how well one can approximate the IDS of the full Hamilton operator
by the spectral distribution functions of appropriately chosen finite-volume analogues.
This question has been pursued for various types of selfadjoint operators, resp.\ Hamiltonians,
in particular in the mathematical physics and geometry literature.
Let us mention the seminal papers \cite{Pastur-71, Shubin-79} and the recent reviews
\cite{KirschM-07,Veselic-07b}. There one can find also an overview of the literature up to '07.

It turns out that in the discrete and in the one-dimensional setting
one can control the convergence of finite volume
approximants to the IDS very well. Let us state this more precisely:
\begin{itemize}\sloppy
\item For difference operators (with finite range) on combinatorial graphs the
eigenvalue counting functions are bounded. This leads to uniform convergence (in
supremum norm) for the IDS \cite{LenzMV-08,LenzV-09}.
\item
For Schr\"odinger operators on metric graphs (so
called quantum graphs) with constant edge lengths
one can achieve uniform
convergence for the IDS as well \cite{GruberLV-07}.
Here one has to assume that the randomness satisfies a finite local complexity condition.
Note that for quantum graphs the eigenvalue counting functions
are unbounded: However, the technically relevant objects are the spectral shift functions, which are still bounded.
\item Metric graphs with non-constant edge lengths lead to unbounded shift
functions; in this case, convergence holds locally uniformly, as well as globally
uniformly with respect to a weighted supremum norm \cite{GruberLV-08}.
\end{itemize}
See also \cite{GruberLV-08} for an overview.

For electromagnetic Schr\"odinger operators on $\RR^d$ even the perturbation by a compactly supported
potential may lead to a locally unbounded spectral shift function.
Thus the shift function diverges not only at infinity but also on compact energy intervals.
This is in particular the case for Landau-type  Hamiltonians, see, e.g.,
\cite{RaikovW-02b,HundertmarkKNSV-06} and references therein.
One may expect that the situation will be better for certain
random perturbations of the  Landau Hamiltonian.
In order to obtain continuity of the IDS of Landau-type Hamiltonians plus a random, ergodic potential
one has to pose appropriate conditions on the randomness.
They amount to regularity conditions on the random distribution (see
\cite{CombesH-96,Wang-97,HupferLMW-01a,CombesHK-07} and references therein).
The results of the present paper apply to highly ``singular'' distributions, though: those of
Bernoulli type;
at each lattice site in $\ZZ^d$, local electric and magnetic potentials are chosen randomly from a finite set of prototypes.
For such models there are no results on the continuity of the IDS.
Thus this property cannot help us proving uniform convergence of the distribution functions.
Our main result is that one can still achieve a strong form of convergence.
More precisely, we show that convergence holds in the space $L^p(I)$ for any finite interval
$I\subset\RR$ and any finite $p$.

In the following section we describe our model and assumptions and state the main theorems.
Section~\ref{s:ssf} provides bounds on the spectral shift function. They are applied in Section
\ref{s:additivity}  which establishes certain almost additivity properties and thus concludes the proof of the main theorem.
The latter is applied to certain types of alloy type random Schr\"odinger operators in the final section.

\section{Model and results}\label{s:results}
Throughout the paper we will consider electromagnetic Schr\"odinger operators which satisfy the following regularity
\begin{ass}\label{a:regularity}
Let $\cU$ be an open set in $\RR^d$, $A\colon \cU\to \RR^d$ a magnetic vector potential, each component of
which is locally square integrable, $V=V_+ -V_-\colon \cU \to \RR  $ a scalar electric potential,
such that its positive part $V_+ \ge 0$ is locally integrable and its negative part $V_- \ge 0$
is in the Kato class.
This implies that $V_-$  is relatively form bounded with respect to $-\Delta^\cU$, the Dirichlet Laplacian on $\cU$, with
relative bound $\delta$ strictly smaller than one.
Under these conditions  the magnetic Schr\"odinger operator
\begin{equation}\label{E:Hdefn}
  H^\cU=(-i\nabla-A)^2+V
\end{equation}
is well defined via the corresponding lower semi-bounded quadratic form with core $C_c^\infty(\cU)$
\cite{Simon-79c}.
Due to this choice of core, we say that $H^\cU$ has Dirichlet boundary conditions.
\end{ass}

Let us mention locally uniform $L^p$-integrability conditions which are sufficient for $V_-$ to be in the Kato-class.
More precisely, if $V_-$ satisfies
\begin{displaymath}
\Vert V_- \Vert_{L^p_{\text{loc},\text{unif}}(\RR^d)}
= \sup_{x\in\RR^d} \Big( \int_{|x-y|\le 1} \vert V_-(y)\vert^p \, dy \Big)^{1/p} <\infty
\end{displaymath}
for $p=1$ if $d=1$ and $p>d/2$ if $d\ge 2$, then it belongs to the Kato-class.

\bigpar
Next we want to introduce the notions of a colouring and a pattern. For this purpose we denote the set of all finite subsets of $\ZZ^d$ by $\FinSetsZd $
and an arbitrary finite set by ${\mathcal A}$. A \textit{colouring} is a map 
$${\mathcal C}: \ZZ^d\rightarrow \mathcal A$$ 
and a \textit{pattern} is a map $P: D(P) \rightarrow \mathcal A$,
where
$D(P)\in \FinSetsZd $ is called the domain of $P$.
We denote the \textit{set of all patterns} by ${\mathcal P}$. For a fixed $Q\in \FinSetsZd $  we denote
the subset of ${\mathcal P}$ which contains only the patterns with domain $Q$ by ${\mathcal P}(Q)$.
Given a set $Q\subset D(P)$ and an element $x\in \ZZ^d$ we define a \textit{restriction of a pattern} $P\vert_Q$ and the
\textit{translate of a pattern} $P+x$ by the vector $x$ in the following way
\begin{equation*}
P\vert_Q\colon Q\rightarrow {\mathcal A}, g\mapsto P\vert_Q(g)=P(g), \quad P+x\colon D(P)+x\rightarrow {\mathcal A}, y+x\mapsto P(y)
\end{equation*}
Two patterns $P_1, P_2$ are \textit{equivalent} if there exists an $x \in \ZZ^d$ such that $P_2= P_1+	x$.
The equivalence class of a pattern $P$ in $\cP$ is denoted by $\tilde P$.
This induces on $\cP$ a set of  equivalence classes $\tilde {\mathcal P} $.
For two patterns $P$ and $P'$ the number of occurrences of the pattern $P$ in $P'$ is denoted by
\begin{equation*}
\numb[P]{(P')}\coloneq  \numb{\{x\in \ZZ^d\,\vert\, D(P)+x\subset D(P'), P'\vert_{D(P)+x}= P+x\}}.
\end{equation*}
Here, $\numb{}$ denotes the cardinality of a finite set.

Next we define the notion of a van Hove sequence and of the frequency of a pattern along a given van Hove sequence.
A sequence  $(U_j)_{j\in \mathbb N}$ of finite, non-empty subsets of $\ZZ^d$ is called a van Hove sequence if
\begin{equation} \label{e:van-Hove}
 \text{ for all } M \in \NN :\quad \lim_{j\to \infty}  \frac{\numb{\partial^{M}U_j}}{\numb{U_j}} =0.
 \end{equation}
Here $\partial^{M}U  = \{x \in U\mid \dist(x, \ZZ^d\setminus U) \le M \} \cup \{x \in \ZZ^d\setminus U\mid \dist(x,U) \le M \}. $

It is sufficient to check the relation \eqref{e:van-Hove} for $M=1$, it then follows for all $M\in \NN$, cf.\ for instance Lemma 2.1 in \cite{LenzSV}.
If for a pattern $P$ and a van Hove sequence $(U_j)_{j\in \mathbb N}$ the limit
\begin{equation*}
\nu_P\coloneq \lim\limits_{j\rightarrow \infty}\frac{\numb[P]{({\mathcal C}\vert_{ U_j})}}{\numb{U_j}}.
\end{equation*}
exists, we call $\nu_P$ the \textit{frequency of $P$  along $(U_j)_{j\in \mathbb N}$} in the colouring ${\mathcal C}$.

\bigpar
In our setting $\cA$ will be  a finite collection of pairs $(a,v)$ where
$a$ is a function $\RR^d \to \RR^d$ such that all its components are in $L^2$ and
$v$ is a function $\RR^d \to \RR$ such that its positive part is in $L^1$ and its negative part is in the Kato class,
cf.\ Assumption~\ref{a:regularity}.
Moreover, both the support of $a$ and of $v$ are contained in  
$$W_0\coloneq[0,1]^d.$$
 Given a colouring $\cC\colon \ZZ^d \to \cA$
we denote by $\cC_a$ its first and by $\cC_v$ its second component.
To each colouring  $\cC$ we associate  an electromagnetic potential $(A_\cC,V_\cC)\colon \RR^d \to \RR^d  \times \RR$
\begin{equation*}
 A_\cC(x) \coloneq  \sum_{k \in \ZZ^d}  \cC_a(k)(x-k), \quad V_\cC(x) \coloneq  \sum_{k \in \ZZ^d}  \cC_v(k)(x-k)
\end{equation*}
and a Schr\"odinger operator
\begin{equation*}
H_\cC\coloneq (-i\nabla-A_\cC)^2+V_\cC.
\end{equation*}
Note that  for any open subset $\cU$ of $\RR^d$ the restriction $H_\cC^\cU$ of $H_\cC$ to $\cU$ with Dirichlet boundary conditions
satisfies Assumption~\ref{a:regularity}.

\bigpar
Now we want to define the IDS of $H_\cC$ and of finite restrictions thereof. For this purpose we need
some more notation.
In order to associate finite subsets of $\ZZ^d$ with bounded subsets of $\RR^d$,
we define
\[
W\colon \FinSetsZd  \to \cB (\RR^d), \quad  Q \mapsto W_Q \coloneq \bigcup_{t\in Q} (W_0+t)
 \]
where we use the natural embedding $\ZZ^d\subset\RR^d$ and denote the Borel-$\sigma$-algebra by $\cB$.
Furthermore, if $\cU$ is an open set in $\RR^d$, $H^\cU$ a Schr\"odinger operator
defined on $\cU$ and $Q\in\FinSetsZd$ such that $\interior{W_Q}\subset\cU$ we
define, by a slight but natural abuse of notation,
\[ H^Q\coloneq H^{ \interior{W_Q} }, \]
i.e.\ the restriction of $H^\cU$ to the interior of $W_Q$ in the sense of
quadratic forms as above.

Let us denote by $\chi_{(-\infty,\lambda]} \big( H_\cC\big)$ and
$\chi_{(-\infty,\lambda]} \big( H_\cC^Q \big)$ the spectral families
of $H_\cC$ and $H_\cC^Q$, respectively.
The IDS of $H_\cC^Q$ is the distribution function
\[
N\big(\lambda, Q\big) \coloneq \Tr \big[ \chi_{(-\infty,\lambda]} \big( H_\cC^{Q} \big)\big],
\]
divided by the volume $\vol W_Q=\numb Q$.
Since $Q$ is finite, $H_\cC^Q$ is an elliptic operator on a bounded domain
and $\chi_{(-\infty,\lambda]} \big( H_\cC^Q \big)$ is trace-class. However, 
$\chi_{(-\infty,\lambda]} \big( H_\cC\big)$ is not, which is the reason why we need
the existence theorem below, and why $H_\cC$ may display any interesting spectral features at all.

For $M\in \NN$ we denote by $C_M\subset \ZZ^d$ the cube at the origin with side length $M  - 1$, i.e.
\begin{equation*}
C_M\coloneq \{ x\in \ZZ^d:  0\leq x_j \leq M-1, j=1,\ldots, d\}.
\end{equation*}

Now we are prepared to state our main theorem:
\begin{thm}\label{t:IDS-colouring}
Let $\cC\colon \ZZ^d \to \cA$ be a colouring,  $(U_j)_{j\in \mathbb N}$ a van Hove sequence such that
for all patterns $P\in \bigcup_{M\in \NN}{\cP}(C_M)$ the frequencies $\nu_P$
exist.
Let $I\subset \RR$ be a finite interval.  Then there exists a function
$N$ belonging to all $ L^p(I)$ with $1\leq p < \infty$   and independent of the van Hove sequence such that
for $j\to \infty$ we have
\begin{equation}
 \int_I \left|N\big(\lambda) - \frac{1}{\numb{U_j}}  N\big(\lambda, {U_j}\big) \right|^p \, d\lambda\to 0
\end{equation}
for any any $p \in [1,\infty)$.
More precisely, for any $M \in \NN$ the above integral is bounded by
\begin{multline*}
\frac{C}M+\left( C(T+C)^{\frac d2}+c_{p,d}C^{\frac1p} \right) \frac{\numb{\partial^MU_j}}{\numb{U_j}}+ \\
+ C(T+C)^{\frac d2} \sum_{P\in\cP(C_M)} \Big \vert \frac{\numb[P]{ ({\mathcal C}\vert_{U_j})}}{\numb{U_j}}- \nu_P  \Big \vert
\end{multline*}
where $T=\sup I$. The dependencies of the constants appearing in the estimate are as follows:
$c_{p,d}$ depends only on the dimension $d$ and the exponent $p$,   and
$C$ depends only on $d$ and on (the Kato norm of) $V_{\cC,-}$.

\end{thm}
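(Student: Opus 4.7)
The plan is to cast the theorem as an almost-additive ergodic statement in the Banach space $L^p(I)$ and to feed it into the frequency-convergence structure provided by the finite local complexity hypothesis. Two ingredients are needed: (i) almost-additivity of the eigenvalue counting functions in $L^p(I)$, with a boundary defect controlled via the spectral shift function (SSF); and (ii) the convergence of pattern frequencies along the van Hove sequence.

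\emph{Step 1 (pattern-wise IDS).} For each $P\in\cP(C_M)$ and any colouring $\cC$ extending $P$, set
$$n_P(\lambda) := \Tr \chi_{(-\infty,\lambda]}\bigl( H_\cC^{C_M} \bigr).$$
By the local support of $A_\cC,V_\cC$ in $W_0$ and translation invariance this depends only on the equivalence class of $P$. The Weyl-type upper bound for Schr\"odinger operators with a Kato-class negative part gives $\|n_P\|_{L^\infty(I)} \le C(T+C)^{d/2}\numb{C_M}$ with $C$ depending only on $d$ and the Kato norm of $V_{\cC,-}$.

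\emph{Step 2 (almost-additivity in $L^p(I)$).} For finite $Q\subset\ZZ^d$ put $F(Q,\lambda):=\Tr\chi_{(-\infty,\lambda]}(H_\cC^Q)$. Decompose $Q$ into disjoint translates of $C_M$ together with a residual boundary piece contained in $\partial^M Q$. Bracketing $H_\cC^Q$ between direct sums of Dirichlet restrictions to these pieces and invoking the SSF bounds of Section~\ref{s:ssf} on $I$ produces
$$\Bigl\| F(Q,\cdot) - \sum_\alpha F(Q_\alpha,\cdot) \Bigr\|_{L^p(I)} \le \bigl( C(T+C)^{d/2} + c_{p,d} C^{1/p} \bigr)\numb{\partial^M Q},$$
where the first summand is the pointwise SSF bound (a Weyl eigenvalue count up to energy $T$) and the second is the intrinsic $L^p$ size of the SSF produced by one local decoupling. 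Grouping the tiles $Q_\alpha$ according to their pattern and replacing tile counts by full pattern occurrence counts $\numb[P]{(\cC\vert_Q)}$ introduces a further boundary error of the same form.

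\emph{Step 3 (ergodic average).} Setting $Q=U_j$ and dividing by $\numb{U_j}$, the frequency hypothesis allows the replacement of each $\numb[P]{(\cC\vert_{U_j})}/\numb{U_j}$ by its limit $\nu_P$ at the cost of
$$C(T+C)^{d/2}\sum_{P\in\cP(C_M)} \Bigl| \frac{\numb[P]{(\cC\vert_{U_j})}}{\numb{U_j}} - \nu_P \Bigr|,$$
the prefactor being the uniform $L^p(I)$ bound on $n_P/\numb{C_M}$ from Step~1. This yields the candidate scale-$M$ approximation
$$N_M(\lambda) := \frac{1}{\numb{C_M}} \sum_{P\in\cP(C_M)} \nu_P\, n_P(\lambda).$$

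\emph{Step 4 (limit in $M$).} Applying the same almost-additivity comparison at two scales $M\le M'$ shows that $(N_M)_M$ is Cauchy in $L^p(I)$ with rate $C/M$, accounting for the first term in the stated bound. The limit $N\in L^p(I)$ is independent of the van Hove sequence because each $N_M$ is defined intrinsically from the frequencies $\nu_P$. Combining Steps~2--4 produces the full quantitative estimate.

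The principal obstacle is the $L^p$-control of the SSF in Step~2: for a codimension-one decoupling perturbation the SSF is in general only locally integrable and may be unbounded even on finite energy windows. The argument has to exploit the restriction to $I\subset(-\infty,T]$, the Kato-class form bound on $V_-$, and a careful Weyl-type eigenvalue count to replace the pointwise singularity by the concrete prefactors $C(T+C)^{d/2}$ (pointwise) and $c_{p,d} C^{1/p}$ ($L^p$) that appear in the theorem. This is exactly what Section~\ref{s:ssf} is designed to supply.
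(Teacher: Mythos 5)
Your proposal takes essentially the same route as the paper: view $Q\mapsto N(\cdot,H_\cC^Q)$ as an $L^p(I)$-valued, $\cC$-invariant, almost-additive set function whose boundary defect is controlled by the facet spectral-shift estimates of Section~\ref{s:ssf} (Theorem~\ref{t:singval} together with Lemma~\ref{l:Legendre}), and then pass to the limit via the pattern-frequency structure, with the Weyl bound of Lemma~\ref{l:lower} supplying the $(T+C)^{d/2}$ prefactor. The only real difference is presentational: you inline the $C_M$-tiling and two-scale Cauchy argument that constitute the Banach-space ergodic theorem, whereas the paper proves almost-additivity for arbitrary disjoint decompositions (Lemma~\ref{l:almost-additivity}) and then invokes Theorem~\ref{t:ergodic-theorem} as a black box.
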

\begin{rem}
\begin{itemize}
\item Obviously, the theorem yields a function $N\in L^p(\RR)$ such that on each finite interval $I$ the above
holds, and we may extend this to (upper) semibounded intervals since our operators are uniformly
semibounded below.
\item In particular one may choose the van Hove sequence $U_j\coloneq C_j$ or a similar family of expanding cubes
(if the frequencies exist) so that one gets an explicit $\frac1j$-decay for the second term in the error estimate.
A sequence of cubes  is a common choice for defining the IDS.
\item In the random setting, introduced in Section~\ref{s:random},  there is an alternative definition of the IDS
(Pastur-Shubin formula) which coincides with $N$, as we show there.
\end{itemize}
\end{rem}

\section{Bounds on the spectral shift function of facets}\label{s:ssf}
In this section we prove certain bounds on the spectral shift function (SSF) which are needed in Section \ref{s:additivity}.
They concern the SSFs of two electromagnetic Schr\"odinger operators which differ by an (additional)
Dirichlet boundary condition on a facet.
This difference can be understood as a generalized (positive) compactly supported potential.
Hence we can use results established in \cite{HundertmarkKNSV-06}, either verbatim or
with slight modifications.

Recall that if $V_-$ is in the Kato class, then there
exists some number $\delta$ smaller than one such that $V_-$ is relatively $\Delta$-bounded
with relative bound $\delta$.

We quote Lemma 5 from  \cite{HundertmarkKNSV-06}:

\begin{lem}\label{l:lower}
Let $H^\cU$ be a Schr\"odinger operator defined on the open set $\cU \subset \RR^d$ of finite volume
which satisfies Assumption~\ref{a:regularity}. Denote by $E_n$ the $n^{\text{th}}$ eigenvalue counted from
below including multiplicity of $H^\cU$. Then there exists a constant $C_1$ such that
\begin{equation}
\label{e-Weyl}
E_n \ge
\frac{2\pi(1-\delta) d}{e} \,
\Big(\frac{n}{|\mathcal{U}|}\Big)^{2/d} \!\!-C_1 \qquad \text{ for all }n\in\NN .
\end{equation}
\end{lem}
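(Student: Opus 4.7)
The plan is to prove this Weyl-type lower bound by reducing to the pure magnetic Laplacian, dominating its heat semigroup via the diamagnetic inequality, and inverting the resulting bound on the eigenvalue counting function.

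First I would absorb the potential into the kinetic part. Since $V_-$ is in the Kato class and hence relatively form bounded with respect to $-\Delta^\cU$ with bound $\delta<1$, and since replacing $-\Delta^\cU$ by $(-i\nabla-A)^2$ only adds a nonnegative contribution (the diamagnetic inequality at the level of forms), one obtains
\begin{equation*}
V_- \le \delta\,(-i\nabla-A)^2 + C_\delta
\end{equation*}
as quadratic forms for some constant $C_\delta\ge 0$. Together with $V_+\ge 0$ this yields $H^\cU \ge (1-\delta)(-i\nabla-A)^2 - C_\delta$, so by the min--max principle $E_n(H^\cU) \ge (1-\delta)\,E_n(H_A^\cU) - C_\delta$, where $H_A^\cU \coloneq (-i\nabla-A)^2$ is the magnetic Dirichlet Laplacian on $\cU$.

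Next I would estimate the eigenvalue counting function of $H_A^\cU$ via the heat semigroup. The pointwise diamagnetic inequality $|e^{-tH_A^\cU}\psi|\le e^{t\Delta^\cU}|\psi|$, together with the standard Dirichlet heat-kernel estimate $k_0^\cU(t,x,x)\le(4\pi t)^{-d/2}$, gives
\begin{equation*}
\Tr\bigl(e^{-tH_A^\cU}\bigr) \le \Tr\bigl(e^{t\Delta^\cU}\bigr) \le |\cU|\,(4\pi t)^{-d/2}
\qquad\text{for all }t>0.
\end{equation*}
The Chebyshev-type bound $N_\lambda(H_A^\cU)\,e^{-t\lambda}\le \sum_n e^{-tE_n(H_A^\cU)}=\Tr(e^{-tH_A^\cU})$ therefore yields $N_\lambda(H_A^\cU)\le e^{t\lambda}|\cU|(4\pi t)^{-d/2}$, and optimizing in $t$ at $t=d/(2\lambda)$ leads to
\begin{equation*}
N_\lambda(H_A^\cU) \le |\cU|\,\Bigl(\frac{e\lambda}{2\pi d}\Bigr)^{d/2}.
\end{equation*}

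Finally, inverting through $E_n(H_A^\cU)=\inf\{\lambda:N_\lambda(H_A^\cU)\ge n\}$ gives $E_n(H_A^\cU)\ge \frac{2\pi d}{e}(n/|\cU|)^{2/d}$, which combined with the first step produces the claim with $C_1=C_\delta$. The only delicate technical point is the justification of the diamagnetic inequality on the bounded domain $\cU$ with Dirichlet boundary conditions for vector potentials that are merely locally square integrable; this rests on standard quadratic-form approximation by smooth $A$'s, but is not entirely automatic. Everything else reduces to routine min--max and elementary calculus.
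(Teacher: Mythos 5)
Your argument is correct, and it deserves comparison not with a proof in this paper (which simply quotes the lemma from \cite{HundertmarkKNSV-06} without reproducing its proof) but with the reference itself; the form of the constant $\frac{2\pi d}{e}$ is precisely the signature of the heat-kernel/Chebyshev optimization you carry out, so your route is essentially the standard one and matches what the cited source does.

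Two small remarks on the write-up. First, the phrase ``replacing $-\Delta^\cU$ by $(-i\nabla-A)^2$ only adds a nonnegative contribution'' is not literally what is going on: the magnetic form is not pointwise larger than the free form. What you actually use, and what the parenthetical clarification correctly identifies, is Kato's form-level diamagnetic inequality $\langle |\psi|, -\Delta^\cU |\psi|\rangle \le \langle \psi, (-i\nabla-A)^2 \psi\rangle$, combined with the fact that $\int V_-|\psi|^2$ depends only on $|\psi|$; chaining this with the form bound $\int V_-|\varphi|^2 \le \delta\langle\varphi,-\Delta^\cU\varphi\rangle + C_\delta\|\varphi\|^2$ applied to $\varphi=|\psi|$ gives the claimed $V_-\le \delta(-i\nabla-A)^2+C_\delta$, hence $H^\cU\ge (1-\delta)(-i\nabla-A)^2-C_\delta$ and, by min--max, $E_n(H^\cU)\ge (1-\delta)E_n(H_A^\cU)-C_\delta$. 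Second, your heat-kernel step is sound: $\Tr e^{-tH_A^\cU}\le\Tr e^{t\Delta^\cU}\le |\cU|(4\pi t)^{-d/2}$ by the diamagnetic and domain-monotonicity bounds on the Dirichlet heat kernel, the Chebyshev inequality $N_\lambda e^{-t\lambda}\le\Tr e^{-tH_A^\cU}$ is valid since the trace is finite, and minimizing at $t=d/(2\lambda)$ does give $N_\lambda\le|\cU|\bigl(e\lambda/(2\pi d)\bigr)^{d/2}$, whence $E_n(H_A^\cU)\ge\frac{2\pi d}{e}(n/|\cU|)^{2/d}$. You also correctly flag the only genuinely delicate point, namely the justification of the diamagnetic inequality for $A\in L^2_{\loc}$ on the Dirichlet form domain; this is standard but not automatic. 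With that caveat acknowledged, the proof is complete and correct, with $C_1=C_\delta$.
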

The constant $C_1$ depends on the Kato norm of $V_-$ only.
Lemma \ref{l:lower} will be used in the proof of the next result, which
is a slight modification of Theorem~1 in \cite{HundertmarkKNSV-06}.

We will frequently use certain subsets of $d-1$ dimensional hyperplanes in $\RR^d$. They are unit squares in the hyperplanes.
More formally we define:

\begin{dfn}\label{d:facet}
A set $S \subset \RR^d$ is called \emph{canonical facet}
if there exists a $j \in \{1, \dots,d\}$ such that
\[
 S = \{(x_1, \dots, x_d)\mid x_j=0 \text{ and  $x_i\in[0,1]$ for $i\neq j$}    \}.
\]
A set $S$ is called a \emph{facet} if there exists a canonical facet $\tilde S$ and a vector $x \in\ZZ^d$
such that
\[
 S = x + \tilde S \coloneq \{ y \in \RR^d \mid y-x \in \tilde S\}.
\]
\end{dfn}

Let $\cU$ be an open subset of $\RR^d$, $S$ a facet as defined in Definition  \ref{d:facet},
and $\tilde \cU \coloneq \cU \setminus S$. Let $H_1$ be a Schr\"odinger operator on $\cU$, satisfying Assumption~\ref{a:regularity}.
Using quadratic forms we define $H_2$ as the Dirichlet restriction of $H_1$ to $\tilde \cU$. Then  the operators
$e^{-H_1}$, $e^{-H_2}$, and $V_{\eff}\coloneq e^{-H_2}-e^{-H_1}$ are well defined by the spectral calculus.

\begin{thm}
\label{t:singval}
(a) The operator $V_{\eff}$ is compact.\\
(b) Denote by $\mu_n$ the $n^{\text{th}}$ singular value of $V_{\eff}$ counted from
above including multiplicity.
Then there are finite positive constants $c$ and $C_2$ such that the singular values of the operator $V_{\eff}$ obey
\begin{equation}\label{e-singval}
  \mu_n \le C_2 \, e^{-c  n^{1/d}}.
\end{equation}
The constant $c$ may be chosen depending only on $d$, while $C_2$ depends only on the Kato-class
norm of $V_-$.
\end{thm}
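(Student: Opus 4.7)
This statement is presented as a slight modification of Theorem~1 in \cite{HundertmarkKNSV-06}, so the plan is to follow that proof while adapting to the present hypotheses on $V_\pm$ and $A$. I foresee two main ingredients: a kernel bound that localises $V_{\eff}$ near the bounded facet $S$, followed by an application of the Weyl-type lower bound in Lemma~\ref{l:lower} on a bounded neighbourhood of $S$.

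To begin, I would observe that since $H_{2}$ arises from $H_{1}$ by imposing an additional Dirichlet condition on $S$, one has $H_{2}\ge H_{1}$ as quadratic forms and hence $0\le e^{-H_{2}}\le e^{-H_{1}}$, so $-V_{\eff}=e^{-H_{1}}-e^{-H_{2}}\ge 0$. Using a Feynman--Kac representation, the kernel of $-V_{\eff}$ can be written as an expectation over Brownian bridges from $x$ to $y$ that are forced to meet $S$ before time~$1$. Because $S$ is a bounded subset of a hyperplane (a unit square in the sense of Definition~\ref{d:facet}), a standard Gaussian estimate on the Brownian bridge density yields an off-diagonal bound of the form
\[
|V_{\eff}(x,y)|\le C\, e^{-c(\dist(x,S)^{2}+\dist(y,S)^{2})},
\]
where $C$ absorbs a Kato-class contribution from $V_{-}$. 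This already implies that $V_{\eff}$ is Hilbert--Schmidt and hence proves part (a).

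For part (b), I would fix an open cube $\Lambda\subset\RR^{d}$ slightly larger than the unit square containing $S$, set $P=\chi_{\Lambda}$, and split
\[
V_{\eff}=P V_{\eff} P + (V_{\eff}-PV_{\eff}P).
\]
The Gaussian bound above controls the singular values of the off-diagonal remainder directly. On the diagonal piece, $0\le -P V_{\eff}P \le P e^{-H_{1}} P$, and by the min--max principle
\[
\mu_{n}(PV_{\eff}P)\le \mu_{n}\bigl(e^{-H_{1}^{\Lambda}}\bigr)= e^{-E_{n}(H_{1}^{\Lambda})},
\]
where $H_{1}^{\Lambda}$ is the Dirichlet restriction of $H_{1}$ to $\Lambda$. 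Lemma~\ref{l:lower} applied to $H_{1}^{\Lambda}$ supplies $E_{n}(H_{1}^{\Lambda})\ge c_{0}(n/|\Lambda|)^{2/d}-C_{1}$, giving an $e^{-c' n^{2/d}}$ bound on $\mu_{n}(PV_{\eff}P)$. Combining via a Ky~Fan inequality with the exponentially decaying remainder then yields the (weaker but uniform) stated bound $C_{2}\,e^{-cn^{1/d}}$, with the exponent $n^{1/d}$ serving as a convenient form that absorbs the overhead from the localisation.

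The main obstacle is that $e^{-H_{1}}$ is not compact when $\cU$ is unbounded, so Lemma~\ref{l:lower} cannot be applied to $H_{1}$ directly; the Gaussian kernel estimate for $V_{\eff}$ is precisely what enables the reduction to the bounded-domain situation. The remaining bookkeeping consists in tracking the constants through each step so that the final $c$ depends only on $d$, while $C_{2}$ depends only on $d$ and the Kato norm of $V_{-}$, as required.
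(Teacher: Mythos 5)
Your decomposition uses the right raw materials (a Feynman--Kac localisation of $V_{\eff}$ near the facet $S$, and the Weyl bound of Lemma~\ref{l:lower}), but the assembly in part~(b) has two concrete errors that the paper's proof is designed to avoid. First, the step $\mu_n(PV_{\eff}P)\le\mu_n(e^{-H_1^{\Lambda}})$ is not justified: from $0\le -PV_{\eff}P\le Pe^{-H_1}P$, min--max only gives $\mu_n(PV_{\eff}P)\le\mu_n(Pe^{-H_1}P)$, and the compression $Pe^{-H_1}P$ is \emph{not} the Dirichlet semigroup $e^{-H_1^{\Lambda}}$; domain monotonicity in fact gives the opposite ordering $e^{-H_1^{\Lambda}}\le Pe^{-H_1}P$, because imposing Dirichlet conditions on $\partial\Lambda$ kills paths and shrinks the heat kernel (one can already check this for the free one-dimensional Laplacian). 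So Lemma~\ref{l:lower} applied to $H_1^{\Lambda}$ does not bound the diagonal piece. Second, and more structurally, a \emph{fixed} cube $\Lambda$ cannot produce an $n$-dependent rate: the off-diagonal remainder $V_{\eff}-PV_{\eff}P$ has operator norm bounded by a fixed constant of order $e^{-c\epsilon^{2}}$ (with $\epsilon$ the fixed margin of $\Lambda$ around $S$), and while your Gaussian kernel estimate makes it Hilbert--Schmidt, such a kernel bound alone does not yield stretched-exponential singular value decay (a bounded kernel concentrated near a bounded set can have only polynomially decaying singular values). Ky~Fan then produces at best $\mu_n(V_{\eff})\le e^{-c'n^{2/d}}+C e^{-c\epsilon^{2}}$, which does not even tend to zero as $n\to\infty$.

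The paper repairs both defects at once by letting the localisation region \emph{grow with $n$}: taking $B_R$ an open ball of radius $R$ containing $S$, it compares $V_{\eff}$ with the genuine difference of Dirichlet semigroups $e^{-H_2^{B_R}}-e^{-H_1^{B_R}}$ (honest Dirichlet restrictions, to which Lemma~\ref{l:lower} and Ky~Fan do apply), proves $\|D_R\|\le\const\cdot e^{-R^{2}/32}$ by the Brownian hitting-time argument, obtains $\mu_n\lesssim e^{-c(n/R^d)^{2/d}}+e^{-R^{2}/32}$, and balances the two terms with $R=R_n=n^{1/(2d)}$. This $n$-dependent radius is precisely where the exponent $n^{1/d}$ in \eqref{e-singval} comes from. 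Your sketch could be brought into line by replacing the fixed $\Lambda$ with this growing ball and by comparing $V_{\eff}$ with the localized \emph{difference} of Dirichlet semigroups rather than with the compression of a single semigroup.
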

\begin{proof}
To prove part (a), i.~e.\ that the operator $V_{\eff}$ is compact,
it is sufficient to find a  family $A_R, R >0$ of compact operators
such that the operator norm of the difference $D_R\coloneq V_{\eff}-A_R$ tends to zero
as $R\to \infty$. Indeed, such an operator family will be constructed
in the proof  of the quantitative statement (b). The operators in the family will be even trace-class.

The proof of (b) is almost the same as the one of Theorem 1 in \cite{HundertmarkKNSV-06}.
We use the same notation as there and explain only the step in the proof which is slightly different in the present setting.
Let $B\coloneq B_R\subset \RR^d$ be an open ball of radius $R>0$
containing the facet $S$ in its interior, and $S^c$ the complement of $S$.

Denote by $H_1^B$ the Dirichlet restriction of $H_1$ to the set $\cU \cap B$ and by
$H_2^B$ the Dirichlet restriction of $H_2$ to the set $\tilde \cU \cap B$. Set
\[
 D = V_\eff -\big(e^{-H_2^B}-e^{-H_1^B}\big).
\]
Let $\Erw$ and $\Prb$ denote expectation and probability for Brownian motion,
$b_t$, starting at $x$. For any open set $ U \subset \RR^d$ denote by $\tau_U=\inf\{t>0\mid b_t\not\in U\}$ the
first exit time from $U$. We use the Feynman-Kac-It\^{o} formula to express $e^{-H_1}f$ for $f\in C_c(\cU)$
as
\[
 e^{-H_1}f(x) = \Erw \l[ e^{-iS_A^1 (b)}e^{-\int_0^1 V(b_s) ds}
    \chi_{\{\tau_\cU > 1\}}(b) f(b_1) \right]
\]
where $S_A^t$ is a real valued stochastic process (It\^{o} integral) corresponding to the magnetic vector potential $A$
of the Schr\"odinger operator; this representation holds for more general $f$ \cite{Simon-79c} but a dense set suffices for our purposes.
Analogous representations hold for the operators
$e^{-H_2}$, $e^{-H_1^B}$ , and $e^{-H_2^B}$  if one replaces the condition $\chi_{\{\tau_\cU > 1\}}$
by
\[
\chi_{\{\tau_{\tilde\cU} > 1\}}, \chi_{\{\tau_{\cU\cap B} > 1\}}, \text{ and } \chi_{\{\tau_{\tilde\cU\cap B} > 1\}}, \text{ respectively.}
\]
Since the operator $D$ can be expressed in terms of four different exponentials it follows that
\[
(D f)(x)
= \Erw \l[\rho(b) \, e^{-iS_A^1 (b)}e^{-\int_0^1 V(b_s) ds} f(b_1) \right]
\]
where
\[
\rho = \chi_{\{\tau_\cU > 1\}}-
\chi_{\{\tau_{\tilde\cU} > 1\}}-  \chi_{\{\tau_{\cU\cap B} > 1\}}+ \chi_{\{\tau_{\tilde\cU\cap B} > 1\}}.
 \]
A simple transformation (using
$\chi_{\{\tau_\cU > 1\}} - \chi_{\{\tau_{\tilde\cU} > 1\}}
=\chi_{\{\tau_\cU > 1\}} (1 -\chi_{\{\tau_{\tilde\cU} > 1\}})
=\chi_{\{\tau_\cU > 1\}}  \chi_{\{\tau_{\tilde\cU} \le 1\}}
=\chi_{\{\tau_\cU > 1\}}  \chi_{\{\tau_{S^c} \le 1\}}
$
etc.) shows that
\[ \rho = \chi_{\{\tau_\cU > 1\}} \, \chi_{\{\tau_{S^c} \le  1\}} \, \chi_{\{\tau_B \le  1\}}.\]
We abbreviate $\cB \coloneq \{\tau_{S^c} \le  1\} \cap\{\tau_B \le  1\}$. The H\"older inequality implies that
\[
\vert D f\vert(x)
\le  \Big(\Erw \big[ \chi_{\{\tau_\cU > 1\}}   e^{-4\int_0^1 V(b_s) ds}\big]\Big)^{1/4}
     \Big(\Erw \big[        \chi_{\cB}(b) \big]\Big)^{1/4}
    \Big(\Erw \big[ \vert f(b_1)\vert^2 \big]\Big)^{1/2}.
\]
At this point we make the dependence of the operator $D$
on the radius of the ball $B=B_R$ explicit and denote it consequently by $D_R$.
From this point on we can follow exactly the proof of Theorem 1 in \cite{HundertmarkKNSV-06} to conclude that
\[
\|D_R\| \le \text{const}  \ \exp \left (\frac{-R^2}{32}\right).
\]
With the choice $R= R_n = n^{1/2d}$ the desired estimate \eqref{e-singval} follows.
\end{proof}

The next Lemma is an abstraction of the proof of Theorem 2 in  \cite{HundertmarkKNSV-06}, which in turn relies on \cite{HundertmarkS-02}.
The abstract formulation may be of use also in other contexts.

Let $A, B$ be two selfadjoint operators such that $V_\eff=e^{- B}-e^{-A}$  is trace class.
This implies that the sequence $\mu=\{\mu_n\}_{ n\in\NN}$ of singular values of $V_\eff$ (enumerated in decreasing order including multiplicity)
converges to zero. Let $F\colon [0,\infty)\to [0,\infty)$ be a convex function with $F(0)=0$.
In particular, $F$ is isotone because for $x\in[0,\infty),\alpha\in[0,1]$ convexity gives
$F(\alpha x)=F\big(\alpha x + (1-\alpha)0\big)
 \le \alpha F(x) + (1-\alpha) F(0) = \alpha F(x)$
which, since $F(x)\ge 0$, implies $F(\alpha x)\le F(x)$.
Set $\phi(n) = F(n) -F(n-1)$ for $ n \in \NN, n \ge 2$.

\begin{lem} \label{l:Legendre}
(a) Let $F$ be as above. Then
\begin{equation}\label{e:HS}
\int_{-\infty}^T    F\big(|\xi(\lambda,B,A)|\big) \, d\lambda \le e^T \, \langle \phi, \mu\rangle_{\ell^2(\NN)}
\end{equation}
(b)
Let $h \colon \RR \to \RR$ be a bounded measurable function with support in $(-\infty, T]$.
Then
\begin{equation}\label{e:Legendre}
\int_\RR h(\lambda) \,  \xi(\lambda,B,A) \, d\lambda
\le e^T \, \langle \phi, \mu\rangle_{\ell^2(\NN)}  + \int_{-\infty}^T G\big( |h(\lambda)|\big) \, d\lambda
 \end{equation}
where $G$ denotes the Legendre transform of $F$, i.e.\ $G(y)=\sup\{xy-F(x) \mid x\ge0\}$ for $y\ge0$.
\end{lem}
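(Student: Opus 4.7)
The plan is to derive both parts from the Hundertmark--Simon inequality \cite{HundertmarkS-02} applied to the pair $(e^{-B}, e^{-A})$, combined with the Birman--Krein invariance principle and, for (b), Young's inequality for a Legendre pair.

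For part (a), the first step is to invoke the invariance principle for the SSF with the strictly decreasing function $t \mapsto e^{-t}$, which yields
\[
\xi(e^{-\lambda}, e^{-B}, e^{-A}) = -\xi(\lambda, B, A),
\]
so the absolute values agree. Next, substitute $\mu = e^{-\lambda}$ in $\int_{-\infty}^T F(|\xi(\lambda,B,A)|)\, d\lambda$: on the range $\lambda \le T$ one has $\mu \ge e^{-T}$, so the Jacobian $\mu^{-1}$ is bounded above by $e^T$, giving
\[
\int_{-\infty}^T F\big(|\xi(\lambda,B,A)|\big)\, d\lambda \le e^T \int_0^\infty F\big(|\xi(\mu, e^{-B},e^{-A})|\big)\, d\mu.
\]
The Hundertmark--Simon inequality applied to the trace class difference $V_\eff = e^{-B} - e^{-A}$, whose singular values are exactly the $\mu_n$, then bounds the right-hand integral by $\sum_{n\ge 1} \phi(n)\,\mu_n = \langle \phi,\mu\rangle_{\ell^2(\NN)}$ (this is where the hypotheses on $F$, convex with $F(0)=0$, are used), producing \eqref{e:HS}.

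For part (b), the natural tool is Young's inequality for a Legendre pair: $xy \le F(x) + G(y)$ for all $x,y\ge 0$. Apply it pointwise with $x = |\xi(\lambda, B, A)|$ and $y = |h(\lambda)|$ to get
\[
h(\lambda)\, \xi(\lambda, B, A) \le F\big(|\xi(\lambda, B, A)|\big) + G\big(|h(\lambda)|\big).
\]
Integrate over $\RR$; since $h$ is supported in $(-\infty, T]$, both summands are effectively truncated to that interval. The first summand is then controlled by part (a), while the second is precisely $\int_{-\infty}^T G(|h(\lambda)|)\, d\lambda$. Adding these gives \eqref{e:Legendre}.

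The only non-routine step is the combination of the Birman--Krein invariance principle with the substitution $\mu = e^{-\lambda}$: this is where the factor $e^T$ appears and where one must track the Jacobian cleanly on the truncated range. Everything else is assembly: Hundertmark--Simon delivers the sum $\langle \phi,\mu\rangle_{\ell^2(\NN)}$ on the ``exponentiated'' side, and Young's inequality transports the $F(|\xi|)$ bound from (a) into a bound on the linear pairing $\int h\,\xi$ via the Legendre transform $G$.
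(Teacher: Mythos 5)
Your proof is correct and follows essentially the same route as the paper's: the invariance principle applied to $t\mapsto e^{-t}$, the change of variables with the Jacobian bound $e^{-\lambda}\ge e^{-T}$ producing the factor $e^T$, the Hundertmark--Simon inequality for the exponentiated pair, and Young's inequality for the Legendre pair $(F,G)$ to deduce (b) from (a). The only cosmetic difference is that you enlarge the integration range to $\int_0^\infty$ rather than keeping $\int_{e^{-T}}^\infty$ as the paper does; this is harmless since the integrand is nonnegative.
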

Of course the usefulness of the Lemma depends heavily on a priori information about the decay of the sequence
$\mu$. However, for the choice of operators $A= H_1$ and $B = H_2$ we do know that the singular values
decay at an almost exponential rate.

\begin{rem}
Depending on how many additional properties we assume for the function $F$,
we obtain correspondingly more information about its Legendre transform $G$.
This will be discussed next.
\begin{enumerate}
 \item The following properties hold under no additional assumptions on $F$:
$G(0)=0$, $G$ is convex (because $xy-F(x)$ is convex in $y$) and $G(y)\ge0$ for all $y$.
 \item If $\lim\limits_{x \to\infty} \frac{F(x)}{x}=\infty$ then $G$ takes on finite values only (and vice versa).
\item If $F$ is twice differentiable,  $ f \coloneq F' >0$ and $F''>0$ on $[0,\infty)$, then
\[
G(y) = \begin{cases}
        0  & \text{ if  } y \le f(0),\\
        y f^{-1} (y) -F(f^{-1} (y)) & \text{ if  } y > f(0).
       \end{cases}
\]
 Here $f^{-1}$ denotes the inverse of the function $f$.
Consequently,
\[
 \int\limits_{-\infty}^T G\big( |h(\lambda)|\big) \, d\lambda=
\int\limits_{\{\lambda \le T : \, |h(\lambda)| > f(0)\}} \big(|h(\lambda)| f^{-1}(|h(\lambda)|)- F(f^{-1} (|h(\lambda)|))\big)\, d\lambda .
\]
\item If there exist a positive constant $C$ and an exponent $p>1$ such that $\mu_n \le C n^{-p}$, then one can choose the function $F$
as $F(x) = x^{q +1}$, where $q $ is any number smaller than $p-1$. Indeed, for this choice of $F$ we have
$\phi(n)  \le  (q+1) n^{q}$. Thus
\[
\sum_n \mu_n \phi(n) \le (q+1) C \sum_n n^{-p} n^{q} < \infty.
\]
Note that in this case $G(y)=q\left(\frac{y}{q+1}\right)^{\frac{q+1}{q}}$.

\item
If there exist positive constants $c,C$ and $p$ such that $\mu_n \le C e^{-cn^p}$ for all $n \in \NN$, then
for each value of $t < c$, the choice
\[
 F(x) = \int_0^x \big( e^{t y^p} -1\big) \, dy
\]
gives a finite right hand side in \eqref{e:HS}. Indeed, $\phi(n) = \int_{n-1}^n \big( e^{t y^p} -1\big) \, dy\le e^{t n^p}$
and thus
\[
  \langle \phi, \mu\rangle_{\ell^2(\NN)}  \le C \sum_n  e^{-cn^p} e^{tn^p} < \infty.
\]
The Legendre transform for such a choice of $F$ satisfies
\[
G(y)
\le yf^{-1}(y) =  \left ( \frac{\log(1+y)}{t} \right)^{1/p}  \text{ for all } y \ge 0.
\]
Thus in this specific case inequality \eqref{e:Legendre} reads
\begin{multline}\label{e:Legendre-Exponential}
\int_\RR h(\lambda) \,  \xi(\lambda,B,A) \, d\lambda
\le e^T \, C\sum_{n\in\NN} e^{-(c-t) n^p}+ \\
\int |h(\lambda)| \left( \frac{\log(1+\vert h(\lambda)\vert)}{t} \right)^{1/p} \, d\lambda
\end{multline}
which recovers the result of \cite{HundertmarkKNSV-06}.
\end{enumerate}
\end{rem}

Now we prove Lemma \ref{l:Legendre}.
\begin{proof}
Since $V_\eff$ is trace class but not necessarily the operator difference $A-B$, the
SSF is defined via the invariance principle
\begin{displaymath}
\int_{-\infty}^T F(|\xi(\lambda, B, A)|)\, d\lambda
=
\int_{-\infty}^T F(|\xi(e^{-\lambda}, e^{-B}, e^{-A})|)\, d\lambda .
\end{displaymath}
Now a change of variables gives us
\begin{displaymath}
\int_{-\infty}^T F(|\xi(e^{-\lambda}, e^{-B}, e^{-A})|)\, d\lambda
\le
e^T \int_{e^{-T}}^\infty F(|\xi(s , e^{-B}, e^{-A})|)\, d s .
\end{displaymath}
To the last expression we can apply the estimate of \cite{HundertmarkS-02} and bound it above by
\[
\int_{e^{-T}}^\infty F(|\xi(s , e^{-B}, e^{-A})|)\, d s
\le
\sum_{n \in \NN} \mu_n(V_{\eff}) \phi(n).
\]
This establishes claim (a). To prove (b), we note that by the very definition of the Legendre transform,
the Young inequality
\[
|h\cdot\xi  | \le F(|\xi|) + G(|h|)
\]
holds. Integrating over $\lambda$ we obtain
\begin{equation}
\label{e:Young}\nonumber
\int h(\lambda) \xi(\lambda)\, d\lambda  \le
\int_{-\infty}^T F(|\xi(\lambda, B,A)|) \, d\lambda + \int G(|h(\lambda)|) \, d\lambda.
\end{equation}
Together with (a) this completes the proof.
\end{proof}

\section{Almost additivity for the eigenvalue counting functions}\label{s:additivity}

In this section we prove Theorem \ref{t:IDS-colouring}.
For this aim we will apply a Banach space-valued ergodic theorem
obtained in \cite{LenzMV-08}. Actually, for our purposes it will be convenient to quote a slightly streamlined version of this result from \cite{LenzSV}.
To spell it out we need to introduce the notion of a \emph{boundary term} and the properties of \emph{almost additivity} and \emph{invariance}.

\begin{dfn}\label{def_boundaryterm}
A function $b\colon \FinSetsZd  \rightarrow [0,\infty)$ is called a \emph{boundary term} if the following three properties hold:
 \begin{enumerate}[(i)]
   \item $b(Q)=b(Q+x)$ for all $x\in \ZZ^d$ and all $Q\in \FinSetsZd $,
   \item $\lim\limits_{j\rightarrow\infty} \frac{b(U_j)}{\numb{U_j}}=0$ for any van Hove sequence $(U_j)_{j\in \NN}$ and
   \item there exists a constant $D\in (0,\infty)$ such that
$$b(Q)\leq D\numb{Q} \quad \text{ for all } \quad Q\in \FinSetsZd $$
 \end{enumerate}
\end{dfn}
\begin{dfn}
  Let $(X,\|\cdot\|)$ be a Banach space and $F$ a function $
  \FinSetsZd \to	 X$.

  (a) The function $F$ is said to be \emph{almost-additive} if there
  exists a boundary term $b$ such that
  \begin{equation*}
    \| F ( \cup_{k=1}^m Q_k ) - \sum_{k=1}^m F(Q_k)\| \leq \sum_{k=1}^m b
    (Q_k)
  \end{equation*}
  for all $m\in\mathbb{N}$ and all pairwise disjoint sets $Q_k\in\FinSetsZd$,
  $k=1,\ldots,m$.

  (b) Let $\cC :\ZZ^d \longrightarrow \cA$ be a colouring.
  The function $F$ is said to be \emph{$\cC$-invariant} if
  \begin{equation*}
    F( Q) = F (Q+x)
  \end{equation*}
  whenever $x \in \ZZ^d$  and $Q\in\FinSetsZd$ obey $\cC\vert_Q +x= \cC\vert_{Q+x}$.

  In this case there exists a function $\widetilde F$ defined on the (classes of) patterns
  such that $\widetilde F(P)=F(Q)$ if $\cC\vert_Q = P$.
\end{dfn}

If $F\colon \FinSetsZd \rightarrow X$ is  almost additive and invariant there exists a $K \in (0,\infty)$ such that
\begin{equation}
\| F(Q) \|   \le K\numb{Q} \hspace{1cm}\text{ for all } Q \in  \FinSetsZd.
\end{equation}

Now we are in the position to quote the Banach space valued ergodic theorem from \cite{LenzMV-08}, see \cite[\S 5.1]{LenzSV} as well.

\begin{thm}
 \label{t:ergodic-theorem}
Let $\mathcal A$ be a finite set of colours, ${\mathcal C}\colon \ZZ^d \rightarrow {\mathcal A}$
a colouring and $(U_j)$ a van Hove sequence along which the frequencies of all patterns $P\in \bigcup_{M\in \NN} {\mathcal P}(C_M)$ exist.
Let $F: {\mathcal F}(\ZZ^d)\rightarrow X$  be a ${\mathcal C}$-invariant and almost-additive function.
Then the limit
\[
\overline F \coloneq \lim\limits_{j\rightarrow \infty}\frac{F(U_j)}{\numb{U_j}}
 = \lim_{M\rightarrow\infty} \sum\limits_{P\in {\mathcal P}(C_M)} \nu_P \frac{\widetilde F(P)}{\numb{C_M}}
\]
exists in $X$. Furthermore, for $j,M\in \mathbb N$ the bound
\begin{equation}
\Big\Vert \overline F  -\frac{F(U_j)}{\numb{U_j}} \Big\Vert
\le
 2\frac{b(C_M)}{M^d} + (K+D)\frac{\numb{\partial^{M} U_j}}{\numb{U_j}}+K\sum\limits_{P\in {\mathcal P}(C_M)} \Big \vert \frac{\numb[P]{ ({\mathcal C}\vert_{U_j})}}{\numb{U_j}}- \nu_P  \Big \vert
\end{equation}
holds.
\end{thm}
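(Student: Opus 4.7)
The plan is to approximate $F(U_j)/\numb{U_j}$ by sums over translates of the cube $C_M$, pass to the limit $j\to\infty$ using the existence of pattern frequencies, and finally let $M\to\infty$ using the boundary-term property. Throughout I use that almost-additivity plus invariance yields a uniform Lipschitz-type bound $\|F(Q)\|\le K\numb{Q}$, already recorded in the excerpt.

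Fix $M\in\NN$. For each shift $y\in C_M$ the translates $\{C_M+Mk+y:k\in\ZZ^d\}$ tile $\ZZ^d$ disjointly. Set $J_M^y(U_j)\coloneq\{k\in\ZZ^d:C_M+Mk+y\subset U_j\}$ and let $R^y_j\coloneq U_j\setminus\bigcup_{k\in J_M^y(U_j)}(C_M+Mk+y)$ be the uncovered remainder. Any point of $R^y_j$ must lie within distance $M$ from $\ZZ^d\setminus U_j$, so $\numb{R^y_j}\le\numb{\partial^M U_j}$. Applying almost-additivity to the disjoint decomposition $U_j=R^y_j\cup\bigcup_k(C_M+Mk+y)$, together with translation invariance $b(C_M+x)=b(C_M)$, the bounds $\|F(R^y_j)\|\le K\numb{R^y_j}$, $b(R^y_j)\le D\numb{R^y_j}$, and $\numb{J_M^y(U_j)}\le\numb{U_j}/M^d$, and dividing by $\numb{U_j}$ yields
\begin{equation*}
\Big\|\frac{F(U_j)}{\numb{U_j}}-\frac{1}{\numb{U_j}}\sum_{k\in J_M^y(U_j)} F(C_M+Mk+y)\Big\|\le (K+D)\frac{\numb{\partial^M U_j}}{\numb{U_j}}+\frac{b(C_M)}{M^d}.
\end{equation*}

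Averaging this inequality over $y\in C_M$ (i.e.\ dividing by $\numb{C_M}=M^d$) is the key step: the double sum $\sum_{y\in C_M}\sum_{k\in J_M^y(U_j)}$ becomes a single sum over every $z\in J_M(U_j)\coloneq\{z\in\ZZ^d:C_M+z\subset U_j\}$, since each such $z$ decomposes uniquely as $z=Mk+y$ with $y\in C_M$. By $\cC$-invariance, $F(C_M+z)=\widetilde F(P_z)$ where $P_z\in\cP(C_M)$ is the pattern induced by $\cC$ on $C_M+z$ after translating back to $C_M$. Grouping by pattern and using that the cardinality of $\{z\in J_M(U_j):P_z=P\}$ is exactly $\numb[P]{(\cC|_{U_j})}$, the averaged inner expression equals
\begin{equation*}
 T_{M,j}\coloneq\frac{1}{M^d\numb{U_j}}\sum_{P\in\cP(C_M)}\numb[P]{(\cC|_{U_j})}\,\widetilde F(P).
\end{equation*}
Using $\|\widetilde F(P)\|\le KM^d$ and the hypothesis that the frequencies $\nu_P$ exist along $(U_j)$, for every fixed $M$ one gets $\|T_{M,j}-S_M\|\le K\sum_{P\in\cP(C_M)}|\numb[P]{(\cC|_{U_j})}/\numb{U_j}-\nu_P|$, where $S_M\coloneq\sum_{P\in\cP(C_M)}\nu_P\,\widetilde F(P)/M^d$.

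Combining the two estimates proves the stated bound relative to $S_M$ in place of $\overline F$, and taking $\limsup_j$ gives $\limsup_j\|F(U_j)/\numb{U_j}-S_M\|\le b(C_M)/M^d$. Applying this at two scales $M,M'$ yields $\|S_M-S_{M'}\|\le b(C_M)/M^d+b(C_{M'})/{M'}^d$. Since $(C_M)_{M\in\NN}$ is itself a van Hove sequence, the boundary-term property forces $b(C_M)/M^d\to 0$; hence $(S_M)$ is Cauchy in $X$, its limit $\overline F$ exists, and $\|\overline F-S_M\|\le b(C_M)/M^d$. Adding this to the previous bound produces exactly the three-term inequality. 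The main obstacle is the averaging-over-shifts maneuver: without it, the sum on the left indexes patterns only at positions of the sublattice $M\ZZ^d$, and there is no way to recognise $\numb[P]{(\cC|_{U_j})}$ (which counts occurrences at \emph{all} positions) so the assumed frequencies $\nu_P$ cannot directly enter. The remaining work is a routine Cauchy-sequence argument and careful bookkeeping of constants.
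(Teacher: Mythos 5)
The paper does not prove Theorem~\ref{t:ergodic-theorem} at all; it is quoted verbatim from \cite{LenzMV-08} (streamlined as in \cite{LenzSV}), so there is no in-paper proof to compare against. Your reconstruction is essentially the standard argument from those references: tile $\ZZ^d$ by translates of $C_M$ on the sublattice $M\ZZ^d$, use almost-additivity plus $\|F(Q)\|\le K\numb{Q}$ and $b(Q)\le D\numb{Q}$ to control the remainder by $\numb{\partial^M U_j}$, then average over the $M^d$ offsets $y\in C_M$ so that the sublattice positions fill out all of $J_M(U_j)=\{z:C_M+z\subset U_j\}$ and the pattern-occurrence counts $\numb[P]{(\cC\vert_{U_j})}$ appear exactly, and finally run the Cauchy argument in $M$ using that $(C_M)$ is itself van Hove. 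The bookkeeping produces precisely the three-term bound, and the identification of the averaged sum with $\sum_P\numb[P]{(\cC\vert_{U_j})}\widetilde F(P)/(M^d\numb{U_j})$ is the correct key observation you highlight. The only small point to make explicit is the metric underlying $\partial^M$: your bound $\numb{R^y_j}\le\numb{\partial^M U_j}$ uses that a point of an incompletely covered tile lies within distance $M$ of the complement, which is exact for the $\ell^\infty$ (or $\ell^1$) distance with cubes of side $M-1$; for a different choice of $\dist$ one would replace $\partial^M$ by $\partial^{cM}$ for a dimensional constant $c$, which does not affect the van Hove conclusion but should be stated to match the convention in force.
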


We want to apply the ergodic theorem
to the eigenvalue counting functions of Schr\"odinger operators,
considered as elements of $X\coloneq L^p(I)$ for a fixed finite
interval $I\subset \RR$ and $p\in[1,\infty)$.

More precisely, we study the function
\[
F: \FinSetsZd \to L^p(I), \quad Q \mapsto N\big( \cdot,  H^Q \big)
\]
with $N\big(\lambda, H^Q\big) \coloneq
\Tr \chi_{(-\infty,\lambda]} \big( H^Q \big)$
for $\lambda\in\RR$.
Note that this notation is slightly different from the one used in Theorem \ref{t:IDS-colouring}.
The reason is that in the proofs we use  a more general class of operators than which was necessary to formulate the main result
and thus need a bit more flexibility.
To conclude Theorem \ref{t:IDS-colouring}
we need to show that $F$ fulfils the hypotheses of Theorem \ref{t:ergodic-theorem}.
This is done in the following
\begin{lem}\label{l:almost-additivity}
The function $ F: \FinSetsZd \to L^p(I)$ is invariant and almost-additive.
\end{lem}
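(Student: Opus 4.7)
\emph{Invariance.} I first observe that if $x\in\ZZ^d$ and $Q\in\FinSetsZd$ satisfy $\cC|_{Q+x}=\cC|_Q+x$, then by construction $A_\cC$ and $V_\cC$ restricted to $\interior{W_{Q+x}}$ are the $x$-translates of their restrictions to $\interior{W_Q}$. Since translation is unitary on $L^2$, the operators $H^Q$ and $H^{Q+x}$ are unitarily equivalent, so $N(\cdot,H^Q)=N(\cdot,H^{Q+x})$ and hence $F(Q)=F(Q+x)$.

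\emph{Reduction via a chain of Dirichlet facets.} For almost-additivity, my plan is to compare $F(Q)=N(\cdot,H^Q)$ for $Q=\bigcup_k Q_k$ with $\sum_k F(Q_k)=N(\cdot,\tilde H)$, where $\tilde H\coloneq\bigoplus_k H^{Q_k}$ is viewed (after the natural identification of $\bigoplus_k L^2(\interior{W_{Q_k}})$ with $L^2(\interior{W_Q})$) as acting on $L^2(\interior{W_Q})$. The two operators differ only by additional Dirichlet conditions imposed on the finite collection $\cE=\{S_1,\dots,S_N\}$ of interior facets separating the pieces $Q_k$. I will introduce the chain $H^{(0)}\coloneq H^Q$ and define $H^{(i)}$ from $H^{(i-1)}$ by imposing a Dirichlet condition on $S_i$, so that $H^{(N)}=\tilde H$. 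Each $H^{(i-1)}$ is a magnetic Schr\"odinger operator on an open subset of $\RR^d$ satisfying Assumption \ref{a:regularity}, hence Theorem \ref{t:singval} will deliver the singular value bound \eqref{e-singval} for $V_\eff^{(i)}\coloneq e^{-H^{(i)}}-e^{-H^{(i-1)}}$ with constants $c,C_2$ uniform in $i$ (they only depend on $d$ and the Kato norm of $V_{\cC,-}$).

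\emph{$L^p$-estimate.} Next I will use that Dirichlet restriction only raises eigenvalues, so all single-step differences are non-negative and telescoping gives
\begin{equation*}
N(\lambda,H^Q)-\sum_{k=1}^m N(\lambda,H^{Q_k})=\sum_{i=1}^N\bigl(N(\lambda,H^{(i-1)})-N(\lambda,H^{(i)})\bigr)=\sum_{i=1}^N|\xi_i(\lambda)|
\end{equation*}
with $\xi_i\coloneq\xi(\cdot,H^{(i)},H^{(i-1)})$. Applying Lemma \ref{l:Legendre}(a) to each $\xi_i$ with the convex function there chosen as $x\mapsto x^p$ (so $\phi(n)\le pn^{p-1}$) and combining with \eqref{e-singval} will yield a constant $K_0=K_0(T,p,d,\|V_{\cC,-}\|_{\text{Kato}})$, $T\coloneq\sup I$, such that $\|\xi_i\|_{L^p(I)}\le K_0$ uniformly in $i$. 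Minkowski's inequality then produces $\|F(Q)-\sum_k F(Q_k)\|_{L^p(I)}\le NK_0$.

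\emph{Boundary term and main obstacle.} Finally, each facet in $\cE$ borders two adjacent cells lying in distinct pieces $Q_k$, which gives the elementary combinatorial bound $N\le 2d\sum_k\numb{\partial^1 Q_k}$. Setting $b(Q)\coloneq 2dK_0\numb{\partial^1 Q}$ then fulfils all three conditions of Definition \ref{def_boundaryterm}: $\ZZ^d$-invariance is clear, $b(Q)\le D\numb{Q}$ holds with $D=2dK_0(2d+1)$ since $\numb{\partial^1 Q}\le(2d+1)\numb{Q}$, and $b(U_j)/\numb{U_j}\to 0$ along any van Hove sequence. The delicate point, and the main obstacle, is that the operator differences $H^{(i)}-H^{(i-1)}$ are not trace class (they are merely a quadratic-form modification supported on a Lebesgue null set), so the SSF must be introduced indirectly through the trace-class heat-kernel difference furnished by Theorem \ref{t:singval}; Lemma \ref{l:Legendre} is exactly the tool that translates this almost-exponential singular value decay into an $L^p(I)$-bound on the SSF, uniform in $i$ and at the cost only of the explicit factor $e^T$ in $K_0$.
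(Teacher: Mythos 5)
Your proof is correct and follows essentially the same route as the paper: translation unitarity for invariance, a telescoping chain of single-facet Dirichlet restrictions combined with Theorem~\ref{t:singval} and Lemma~\ref{l:Legendre} applied to $F(x)=x^p$, and a combinatorial count of separating facets to produce the boundary term. The only cosmetic difference is the constant in $b(Q)$ --- the paper observes that each interior facet is counted twice in $\sum_k\numb{\partial Q_k}$ and so drops your factor of $2$ --- which does not affect the conclusion.
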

\begin{proof}
Note that $H$ is a local operator, thus $H^{\cU}$ depends only on
$A\vert_{\cU}$ and $V\vert_{\cU}$, for any open $\cU\subset \RR^d$.
The translation operator $T_yf(x) = f(x-y) $ is unitary for any $y\in \RR^d$.
Thus the spectrum of $H$, resp.\ $H^\cU$,
is invariant under conjugation by $T_y$. It follows that the function $F$ is $\cC$-invariant.

To prove almost-additivity, let $Q=\bigcup_{k=1}^m Q_k$ with pairwise disjoint
$Q_k\in\FinSetsZd ,k=1,\dots,m$. We need to compare $F(Q)$ to
$\sum_{k=1}^mF(Q_k)$. Note that $W_Q=\bigcup_{k=1}^m W_{Q_k}$
but the $W_{Q_k}$ need not be pairwise disjoint because their boundaries can
touch.
There are two extreme cases:
\begin{enumerate}
\item All $W_{Q_k}$ are pairwise disjoint. Then
$\interior{W_Q}=\bigcup_{k=1}^m \interior{W_{Q_k}}$, and consequently
$H^Q=\bigoplus_{k=1}^m H^{Q_k}$ and $F(Q)=\sum_{k=1}^m F(Q_k)$.

\item No $W_{Q_k}$ is disjoint from all others. Since $\partial W_{Q_k}$
consists of at most $2d\numb{\partial Q_k}$ facets (where $\partial Q_k$ denotes the
combinatorial boundary of $Q_k\subset\ZZ^d$)
the sets $\interior{W_Q}$ and $\bigcup_{k=1}^m \interior{W_{Q_k}}$
differ by at most $2d\sum_{k=1}^m\numb{\partial Q_k}$ facets.
\end{enumerate}
In fact, the latter case gives an upper bound for the general case (where the
``isolated'' $Q_k$ simply do not contribute), and we can drop the factor 2
because touching facets need to be counted once only (they are counted twice in
the sum).

So, let $M\le d\sum_{k=1}^m\numb{\partial Q_k}$ be the number of facets by which $\interior{W_Q}$ and
$\bigcup_{k=1}^m \interior{W_{Q_k}}$ differ and enumerate them arbitrarily as
$S_1,\dots,S_M$. Set $H_0\coloneq H^Q$ and
\[ H_j\coloneq H^{U_j} \text{ where } U_j \coloneq W_Q \setminus
\bigcup_{i=1}^j S_i \]
for $j=1,\dots,M$.
Clearly, $H_M=\bigoplus_{k=1}^m H^{Q_k}$ and $N(\cdot, H_M)= \sum_{k=1}^m F(Q_k)$.
This relation allows us to write the difference that we want to estimate as a sum of SSFs:
\begin{align*}
F(Q) - \sum_{k=1}^mF(Q_k) &= \sum_{j=1}^M \big( N(\cdot, H_{j-1}) - N(\cdot, H_j) \big) 
	= \sum_{j=1}^M \xi(\cdot, H_{j-1}, H_j)
\end{align*}
$H_{j-1}$ and $H_j$ differ exactly by a Dirichlet condition at one facet $S_j$
so that Theorem~\ref{t:singval} applies and gives the estimate $\mu_n\leq
Ce^{-cn^{1/d}}$ for the singular values.
Now we can apply Lemma~\ref{l:Legendre} with $T\coloneq\sup I$,
$A\coloneq H_j$, $B\coloneq H_{j-1}$ and $F(x)\coloneq x^p$.
Then inequality~\eqref{e:HS} reads
\begin{equation}\label{e:SSF-bound}
\int_I |\xi(\cdot, H_{j-1}, H_j)|^p \leq e^T\sum_n C_2 p n^{p-1}e^{-cn^{1/d}}
\eqcolon \tilde C^p
\end{equation}
with a constant $\tilde C$ which is independent of $j$.
The triangle inequality thus gives
\begin{align*}
\left\| F(Q) - \sum_{k=1}^mF(Q_k) \right\|_{L^p(I)} &\le M \tilde C
      \le d \, \tilde C \sum_{k=1}^m \numb{\partial Q_k}
      \eqcolon \sum_{k=1}^m b(Q_k).
\end{align*}
The function $b:\FinSetsZd \to\RR,Q\mapsto d \, \tilde C \numb{\partial Q}$
satisfies the three conditions required for a boundary term.
\end{proof}
\begin{proof}[Proof of Theorem~\ref{t:IDS-colouring}]
Given the previous Lemma, we can apply the abstract ergodic theorem to our counting
functions associated to finite subsets $Q$. In order to check the form of
the error estimate, we note that the constant $\tilde C$ in equation \eqref{e:SSF-bound}  satisfies
\[
\tilde C^p = C_2 \,  e^T \const(p,d)
\]
with $C_2$ being the constant from Theorem \ref{t:singval} depending on the Kato-norm of $V_{\cC,-}$. Also,
\[
b(Q) = d \tilde C \numb{\partial Q}
\]
so that the bound $D$ on $b$ in the general abstract ergodic theorem can be taken to be
$d\tilde C$. It follows that  $\frac{2b(C_M)}{M^d}\leq 4d^2\tilde C/M$.

Finally, the uniform lower estimate on the eigenvalues established in Lemma~\ref{l:lower}
gives a uniform upper estimate on the number of eigenvalues in $(-\infty,T]$, namely
\[
 (T+C_1)^{d/2}  \, \left(\frac{e}{2\pi (1-\delta)}d)\right )^{d/2}  \, |\cU|.
\]
Hence we can take
\[
K =C_3 \, (T+C_1)^{\frac d2}
\]
as the uniform bound on the function $F$ in the abstract ergodic theorem.
Here the constants $C_1,C_2,C_3$  depend only on $d$ and the Kato norm of $V_{\cC,-}$.
\end{proof}

\section{Application to random operators }\label{s:random}
In order to apply our results to random operators we quote the necessary random versions
of the definition and main theorem from \cite{GruberLV-08}:
Let  $(\Omega,\PP)$ be a probability space such that $\ZZ^d$
acts ergodically on $(\Omega,\PP)$.  We denote the $\ZZ^d$-action on $\Omega$ by
$ x\colon \omega\mapsto\omega -x$.
A random $\cA$-colouring is a map
\[
 \cC : \Omega\longrightarrow \bigotimes_{\ZZ^d} \cA \quad\text{with}\quad
\cC(\omega-y)_{x-y} = \cC(\omega)_{x}
\]
for all $x,y \in\ZZ^d$. Note that for each fixed $\omega$ we obtain a (usual) $\cA$-colouring.
By the (usual) ergodic theorem for scalar functions the frequencies of patterns exist almost surely.
Thus we can apply our  abstract Banach space valued ergodic theorem:

\begin{thm} \label{t:ergodictheorem-random}
 Let $\cA$ be a finite set, $\cC$ be a random $\cA$-colouring and
  $(X,\|\cdot\|)$ a Banach space.  Let $(U_j)_{j\in\mathbb{N}}$ be a van Hove
  sequence.
  For each fixed $\omega\in \Omega$ let $F_\omega : \FinSetsZd
  \longrightarrow X$ be a $\cC(\omega)$-invariant, almost-additive
  bounded function. Assume that the family $(F_\omega )_{\omega\in\Omega}$ is $\ZZ^d$-homogeneous, i.e.\ $F_{\omega+x}(Q+x)=F_\omega(Q)$ for all $x\in\ZZ^d,Q\in\FinSetsZd$.
  Then, for almost every $\omega\in\Omega$ the limits
  \begin{equation*}
    \overline{F}_\omega \coloneq  \lim_{j\to\infty} \frac{F_\omega(U_j)}{|U_{j}|}    = \lim_{M\to\infty}
    \sum_{P \in {\mathcal P}(C_M)} \nu_P \frac{\widetilde{F}(P)}{|C_M|}
  \end{equation*}
  exist in the topology of $(X,\|\cdot\|)$ and are equal. In particular, $\overline{F}_\omega$ is almost surely independent of $\omega$.
\end{thm}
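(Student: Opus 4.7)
\medskip
The plan is to reduce the statement to the deterministic Banach-space valued ergodic theorem (Theorem~\ref{t:ergodic-theorem}) by using a classical scalar ergodic theorem to verify its hypothesis -- namely the existence of pattern frequencies -- for almost every $\omega$. The constancy of $\overline{F}_\omega$ will then be obtained separately from the $\ZZ^d$-homogeneity combined with ergodicity.

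First I would establish, for each fixed pattern $P\in\bigcup_{M\in\NN}\cP(C_M)$, that $\nu_P(\omega)$ exists almost surely. The quantity $\numb[P]{(\cC(\omega)|_{U_j})}/\numb{U_j}$ equals $\numb{U_j}^{-1}\sum_{x\in\ZZ^d}\mathbf{1}_{\{D(P)+x\subset U_j\}}\,\mathbf{1}_{\{\cC(\omega)|_{D(P)+x}= P+x\}}$, so the classical (scalar-valued) $\ZZ^d$-ergodic theorem applied to the indicator random variable ``$P$ occurs at the origin in $\cC$'' gives almost-sure convergence to the deterministic constant $\bar\nu_P=\PP(\cC(\cdot)|_{D(P)}=P)$. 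Since $\bigcup_{M\in\NN}\cP(C_M)$ is countable (as $\cA$ is finite), a countable intersection of full-measure sets produces $\Omega_0\subset\Omega$ of full measure on which \emph{all} frequencies $\nu_P$ exist simultaneously and coincide with the deterministic values $\bar\nu_P$. For every $\omega\in\Omega_0$ the hypotheses of Theorem~\ref{t:ergodic-theorem} are then satisfied by $F_\omega$ and $\cC(\omega)$, yielding the existence of $\overline{F}_\omega$ in $(X,\|\cdot\|)$ and the equality of both limit representations.

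It remains to show that $\omega\mapsto\overline{F}_\omega$ is almost surely constant. The $\ZZ^d$-homogeneity $F_{\omega+x}(Q+x)=F_\omega(Q)$ gives $F_{\omega+x}(U_j)=F_\omega(U_j-x)$ for every $x\in\ZZ^d$. Almost-additivity together with the uniform bound $\|F_\omega(Q)\|\le K\numb{Q}$ implies
\[
\bigl\|F_\omega(U_j)-F_\omega(U_j-x)\bigr\|\le K\,\numb{U_j\triangle(U_j-x)}+b\bigl((U_j-x)\cap U_j\bigr)+b\bigl(U_j\setminus(U_j-x)\bigr),
\]
and each term on the right is of order $o(\numb{U_j})$ by the van Hove property applied to the fixed translation $x$. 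Dividing by $\numb{U_j}$ and passing to the limit yields $\overline{F}_{\omega+x}=\overline{F}_\omega$ for every $x$ on a full-measure set closed under the countable $\ZZ^d$-action. Ergodicity of this action then forces $\overline{F}_\omega$ to be almost surely equal to a single element of $X$.

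The main obstacle I anticipate is the careful bookkeeping needed to combine the countably many null sets (one per pattern, one per translation, one for the exceptional $\omega$ outside $\Omega_0$) while preserving measurability of $\omega\mapsto\overline{F}_\omega$ in a separable-enough setting; the analytic estimate comparing $F_\omega(U_j)$ to $F_\omega(U_j-x)$ via the van Hove property is routine once the almost-additivity boundary term is in place.
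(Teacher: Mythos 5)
Your proof is correct and follows essentially the same route as the paper: the paper disposes of this theorem with a one-line remark (``By the (usual) ergodic theorem for scalar functions the frequencies of patterns exist almost surely. Thus we can apply our abstract Banach space valued ergodic theorem''), and your first two paragraphs simply unpack that remark carefully -- the scalar Birkhoff theorem for each pattern indicator, the countable intersection over $\bigcup_M\cP(C_M)$ using finiteness of $\cA$, and then Theorem~\ref{t:ergodic-theorem} applied pathwise on the resulting full-measure set.

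Where you go slightly beyond the paper is the constancy of $\overline F_\omega$. The paper implicitly reads this off the representation $\overline F_\omega=\lim_M\sum_{P}\nu_P\,\widetilde F(P)/\numb{C_M}$, which is manifestly deterministic once the $\nu_P$ are; note however that writing $\widetilde F(P)$ without a subscript $\omega$ silently assumes that $F_\omega(Q)$ depends on $\omega$ only through the local pattern $\cC(\omega)\vert_Q$, something the stated hypotheses ($\cC(\omega)$-invariance plus $\ZZ^d$-homogeneity) give only along a single $\ZZ^d$-orbit, not across orbits. Your ergodicity argument -- translating $U_j$ by a fixed $x$, controlling $\bigl\|F_\omega(U_j)-F_\omega(U_j-x)\bigr\|$ by almost-additivity and the uniform bound $\|F_\omega(Q)\|\le K\numb{Q}$, and then invoking ergodicity of the $\ZZ^d$-action -- sidesteps this implicit assumption and is a cleaner route to the ``almost surely independent of $\omega$'' clause. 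Two small points worth tightening: in your displayed estimate the term $b\bigl((U_j-x)\cap U_j\bigr)$ is not $o(\numb{U_j})$ by property (iii) of a boundary term alone; you need property (ii) together with the observation that $(U_j-x)\cap U_j$ is again a van Hove sequence with $\numb{(U_j-x)\cap U_j}/\numb{U_j}\to 1$. And for the final ergodicity step you need $\omega\mapsto\overline F_\omega$ to be measurable into a separable Banach space (true for $X=L^p(I)$, the case actually used), which you correctly flag as the remaining bookkeeping issue.
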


Concretely, we apply this to colourings given by local models for the potentials like in Section~\ref{s:results},
but now with a randomly chosen colouring so that all operators and counting functions additionally depend on the random
variable $\omega$.  For the formulation of the theorem we introduce a distribution function $N \colon \RR \to \RR$ defined by a trace per unit volume formula
(sometimes called Pastur-Shubin formula)
\begin{equation}
N(\lambda) \coloneq \int_\Omega \Tr \left[\chi_{W_0} \chi_{(-\infty,\lambda]}(H_\omega) \right] d\mathbb{P}(\omega).
\label{e:Shubin-Pastur}
\end{equation}

By applying the random version of the ergodic theorem we get the random version of Theorem~\ref{t:IDS-colouring}:

\begin{thm}\label{t:IDS-random}
Let $\cC : \Omega\longrightarrow \bigotimes_{\ZZ^d} \cA $ be a random colouring,  $(U_j)_{j\in \mathbb N}$ a van Hove sequence.
Let $I\subset \RR$ be a finite interval and $p \in [1,\infty)$. Then for $j\to \infty$ we have for almost all $\omega\in\Omega$
\begin{equation}
 \int_I \left|N\big(\lambda) - \frac{1}{\numb{U_j}}  N_\omega\big(\lambda, {U_j}\big) \right|^p \, d\lambda\to 0. \label{e:IDS-random}
\end{equation}

\end{thm}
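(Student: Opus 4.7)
The strategy is to reduce Theorem~\ref{t:IDS-random} to the deterministic Theorem~\ref{t:IDS-colouring} plus the Banach space valued ergodic theorem (Theorem~\ref{t:ergodictheorem-random}), and then identify the resulting limit with the Pastur--Shubin quantity~\eqref{e:Shubin-Pastur}.

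Setting $F_\omega\colon\FinSetsZd\to L^p(I)$, $F_\omega(Q)\coloneq N_\omega(\cdot,Q)$, I would first verify the hypotheses of Theorem~\ref{t:ergodictheorem-random}. By Lemma~\ref{l:almost-additivity} each $F_\omega$ is $\cC(\omega)$-invariant and almost additive with boundary term $b(Q)=d\tilde C\,\numb{\partial Q}$; both the constant $\tilde C$ from~\eqref{e:SSF-bound} and the uniform bound $\|F_\omega(Q)\|_{L^p(I)}\le K\numb{Q}$ derived from Lemma~\ref{l:lower} are deterministic, depending only on $d$, $p$, $T=\sup I$ and the Kato-class norm of $V_{\cC,-}$. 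The $\ZZ^d$-homogeneity $F_{\omega-x}(Q+x)=F_\omega(Q)$ follows from the covariance $\cC(\omega-x)_{y-x}=\cC(\omega)_y$ of the random colouring, which makes $H_{\omega-x}^{Q+x}$ unitarily equivalent to $H_\omega^Q$ via a translation. The classical Birkhoff theorem, applied to the indicator of each of the countably many patterns $P\in\bigcup_M\cP(C_M)$, shows that with $\PP$-probability one the frequencies $\nu_P$ exist along $(U_j)$ and are deterministic. On this full-measure event Theorem~\ref{t:IDS-colouring} applies to each $\omega$ and Theorem~\ref{t:ergodictheorem-random} applies to $F_\omega$; together they produce a single limit $\overline F\in L^p(I)$, independent of the van Hove sequence and of $\omega$, such that $\frac{1}{\numb{U_j}}F_\omega(U_j)\to\overline F$ in $L^p(I)$ $\PP$-almost surely.

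The main step is to identify $\overline F$ with $N$. Since $\overline F$ is deterministic and $0\le N_\omega(\lambda,U_j)/\numb{U_j}\le K$ uniformly for $\lambda\in I$, dominated convergence yields
\[
\overline F(\lambda)=\lim_{j\to\infty}\EE\Bigl[\tfrac{1}{\numb{U_j}}N_\omega(\lambda,U_j)\Bigr]\qquad\text{in }L^p(I).
\]
Introduce the auxiliary quantity $\tilde N_\omega(\lambda,U_j)\coloneq \Tr\bigl[\chi_{W_{U_j}}\chi_{(-\infty,\lambda]}(H_\omega)\bigr]$. Additivity of the trace on the essentially disjoint cubes $W_0+t$, covariance of $H_\omega$ under translations, and the $\ZZ^d$-invariance of $\PP$ give $\EE[\tilde N_\omega(\lambda,U_j)]=\numb{U_j}\,N(\lambda)$. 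The operators $H_\omega^{U_j}$ and the restriction of $H_\omega$ to $W_{U_j}$ differ by successive Dirichlet conditions along at most $2d\numb{\partial U_j}$ boundary facets, so the SSF telescoping used in the proof of Lemma~\ref{l:almost-additivity}, based on Theorem~\ref{t:singval} and Lemma~\ref{l:Legendre} with $F(x)=x^p$, yields
\[
\|\tilde N_\omega(\cdot,U_j)-N_\omega(\cdot,U_j)\|_{L^p(I)}\le 2d\,\tilde C\,\numb{\partial U_j}=o(\numb{U_j})
\]
by the van Hove property. Dividing by $\numb{U_j}$, taking expectations, and passing to the limit gives $\overline F=N$ in $L^p(I)$, whence~\eqref{e:IDS-random} follows.

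The principal obstacle is this final identification: the two ergodic theorems only guarantee the existence of a deterministic limit, without an explicit a priori formula, and matching it with the Pastur--Shubin definition~\eqref{e:Shubin-Pastur} requires controlling the discrepancy between the Dirichlet finite-volume trace and the spatial truncation of the infinite-volume spectral projection. Fortunately this is handled uniformly in $\omega$ by the very same SSF machinery from Section~\ref{s:ssf} that underlies almost additivity, so no new estimates are needed.
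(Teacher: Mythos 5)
Your overall architecture matches the paper's: establish almost-sure $L^p$-convergence via the Banach-valued ergodic theorem (Theorem~\ref{t:ergodictheorem-random}), then identify the deterministic limit $\overline F$ with the Pastur--Shubin quantity $N$ from~\eqref{e:Shubin-Pastur}. The first part of your argument (invariance, almost additivity with deterministic boundary term, $\ZZ^d$-homogeneity, Birkhoff for pattern frequencies, dominated convergence to pass to expectations, and the identity $\EE[\tilde N_\omega(\lambda,U_j)] = \numb{U_j}\,N(\lambda)$) is correct and agrees with the paper.

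The gap is in the estimate $\|\tilde N_\omega(\cdot,U_j) - N_\omega(\cdot,U_j)\|_{L^p(I)} \le 2d\,\tilde C\,\numb{\partial U_j}$. You justify it by ``the SSF telescoping used in the proof of Lemma~\ref{l:almost-additivity}'', but that telescoping compares eigenvalue counting functions of two \emph{self-adjoint} operators that differ by Dirichlet conditions on finitely many facets. Here $\tilde N_\omega(\lambda,U_j)=\Tr[\chi_{W_{U_j}}\chi_{(-\infty,\lambda]}(H_\omega)]$ is a \emph{spatial truncation} of the spectral projection of the whole-space operator; it is not the counting function of any self-adjoint operator. Writing the difference as $\Tr[\chi_{W_{U_j}}(\chi_{(-\infty,\lambda]}(H_\omega)-\chi_{(-\infty,\lambda]}(\oplus H_\omega))]$ with $\oplus H_\omega = H_\omega^{U_j}\oplus H_\omega^{\ZZ^d\setminus U_j}$ makes the obstruction visible: the difference of spectral projections is generically not trace class, the multiplication by $\chi_{W_{U_j}}$ destroys the SSF structure, and neither Theorem~\ref{t:singval} nor Lemma~\ref{l:Legendre} applies to this quantity as stated. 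The paper circumvents this by passing to the heat-semigroup picture: it suffices to show $\frac{1}{\numb{U_j}}\Tr[\chi_{W^{U_j}}e^{-tH_\omega}-e^{-tH_\omega^{U_j}}]\to 0$ for all $t$, and after inserting $\oplus H_\omega$ the excess term $\Tr[\chi_{W^{U_j}}e^{-t\oplus H_\omega}-e^{-tH_\omega^{U_j}}]$ vanishes exactly, while the remainder $\Tr[\chi_{W^{U_j}}(e^{-tH_\omega}-e^{-t\oplus H_\omega})]$ is handled by the singular-value bound $\mu_n(BK)\le\|B\|\mu_n(K)$ together with Theorem~\ref{t:singval}. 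Since $e^{-tH_{i-1}}-e^{-tH_i}$ \emph{is} trace class, the telescoping then goes through and the van Hove property yields the claim. So your conceptual picture is right, but the step needs the Laplace-transformed formulation to be rigorous; as written it leans on an SSF estimate that does not cover the spatially truncated object.
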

\begin{rem}
Of course there are similar error estimates as in Theorem~\ref{t:IDS-colouring}.
\end{rem}
\begin{proof}
The existence of the limit in Equation~\eqref{e:IDS-random} follows directly from our abstract theorem since we checked all requirements
in the proof of Theorem~\ref{t:IDS-colouring} already.

For the proof of the Shubin-Pastur formula~\eqref{e:Shubin-Pastur} we use a variation of the proof
of Theorem~3 in \cite{GruberLV-07}:
First notice that
\[
	N(\lambda) =  \frac{1}{\numb{U_j}} \int_\Omega \Tr \left[\chi_{W^{U_j}} \chi_{(-\infty,\lambda]}(H_\omega) \right] d\mathbb{P}(\omega)
\]
independently of $U_j$ due to additivity and invariance.
Now it suffices to show that
\begin{equation}
\frac1{\numb{U_j}}  \Tr \left[\chi_{W^{U_j}} e^{-tH_\omega} - e^{-tH_\omega^{U_j}}\right] \to 0,\quad j\to\infty
\tag{*}\label{e:star}
\end{equation}
for all $t$. 
We use the abbreviation $ \oplus H_\omega = H_\omega^{U_j}\oplus H_\omega^{\ZZ^d\setminus U_j}$ 
and the  linearity of the trace to conclude
\begin{multline*}
\Tr \left[\chi_{W^{U_j}} e^{-tH_\omega} - e^{-tH_\omega^{U_j}}\right]  
\\=
\Tr \left[\chi_{W^{U_j}} (e^{-tH_\omega} - e^{-t \oplus H_\omega}) \right]  
+
\Tr \left[\chi_{W^{U_j}} e^{-t \oplus H_\omega}-     e^{-tH_\omega^{U_j}}\right]  .
\end{multline*}
The second term actually vanishes. Thus it suffices to estimate the first term, which we do next.
Let us note that for a compact operator $K$ and a bounded operator $B$ on the same Hilbert space
the corresponding singular values obey the relation 
\[
       \mu_n(BK) \le \|B\| \mu_n(K)  .
\]
In particular this holds for $K=V_{\eff}$ as in Theorem ~\ref{t:singval} 
and $B=\chi_{W^{U_j}}$.
Now we can proceed exactly as in Lemma~\ref{l:almost-additivity},
using that the addition of the boundary conditions by which $\oplus H_\omega$
differs from $H_\omega$ gives a boundary term in $L^p(I)$ for the corresponding SSF.
Now the claim \eqref{e:star} follows from the van Hove property.
\end{proof}

\renewcommand{\MR}[1]{} 
\bibliographystyle{amsalpha}
\bibliography{glv3}

\end{document}